\newtheorem {theorem}{Theorem}
\newtheorem {lemma}[theorem]{Lemma}
\theoremstyle{remark}
\newtheorem {remark}[theorem]{Remark}
\numberwithin{equation}{section}
\numberwithin{theorem}{section}
\title{More Brieskorn Spheres Bounding Rational Balls}
\author{O{\u{g}}uz \c{S}avk}
\address{Department of Mathematics, Bo\u{g}az{\i}\c{c}{\i}  University, Bebek, Istanbul, 34342, Turkey}
\email{\href{mailto:oguz.savk@boun.edu.tr}{oguz.savk@boun.edu.tr}}
\date{}
\begin{document}

\begin{abstract}
We call an integral homology sphere \emph{non-trivially} bounds a rational homology ball if it is obstructed from bounding an integral homology ball.  After Fintushel and Stern's well-known example $\Sigma(2,3,7)$, Akbulut and Larson recently provided the first infinite families of Brieskorn spheres non-trivially bounding rational homology balls: $\Sigma(2,4n+1,12n+5)$ and $\Sigma(3,3n+1,12n+5)$ for odd~$n$. Using their technique, we present new such families: $\Sigma(2,4n+3,12n+7)$ and $\Sigma(3,3n+2,12n+7)$ for even $n$. Also manipulating their main argument, we simply recover some classical results of Akbulut and Kirby, Fickle, Casson and Harer, and Stern about Brieskorn spheres bounding integral homology balls.
\end{abstract}
\maketitle

\section{Introduction}
\label{intro}

The integral homology cobordism group $\Theta_{\mathbb{Z}}^3$ has been playing a central role in both low- and high-dimensional topology \cite{M18}. Working with rational coefficients, the rational homology cobordism group $\Theta_{\mathbb{Q}}^3$ can be also taken into account. There is a canonical homomorphism $\psi: \Theta_{\mathbb{Z}}^3 \to \Theta_{\mathbb{Q}}^3$ induced by inclusion. One way to measure the complexity between homology cobordism groups goes through exploring the kernel of the map $\psi$, denoted by $\mathrm{Ker}(\psi)$. For the other perspectives of studying the map $\psi$, see \cite{KL14}, \cite{ALa18}, \cite{ACP18} and \cite{GL18}.

The natural source for integral homology spheres is Brieskorn homology spheres $\Sigma(p,q,r) = \{x^p +y^q +z^r = 0\} \cap S^5 \subset \mathbb{C}^3$ with pairwise coprime positive integers $p,q$ and $r$. Fintushel and Stern \cite{FS84} gave the first example, $\Sigma(2,3,7)$, non-trivially bounding a rational homology ball. This result can be interpreted as the non-triviality of $\mathrm{Ker}(\psi)$. As the Brieskorn sphere $\Sigma(2,3,7)$ has a non-vanishing Neumann-Siebenmann invariant $\bar{\mu}$ (\cite{N80}, \cite{S80}), this also shows the existence of an infinite order subgroup $\mathbb{Z}$ in $\mathrm{Ker}(\psi)$. The knowledge about the algebraic structure of $\mathrm{Ker}(\psi)$ is limited to those outcomes.

The Brieskorn sphere $\Sigma(2,3,7)$ has remained the single example for more than thirty years. The difficulty of finding another such examples is due to the handle decomposition of four-manifolds. If such an integral homology sphere exists, then the corresponding rational homology ball necessarily contains three-handles. 

In \cite{AL18}, Akbulut and Larson made a huge progress in this area by initially showing that the Brieskorn sphere $\Sigma(2,3,19)$ also non-trivially bounds a rational homology ball. Additionally, they provided first two infinite families of Brieskorn spheres non-trivially bounding rational homology balls: $\Sigma(2,4n+1,12n+5)$ and $\Sigma(3,3n+1,12n+5)$ for odd $n$. Using their technique, we present new such families.

\begin{theorem}
\label{main}
The Brieskorn spheres $\Sigma(2,4n+3,12n+7)$ and $\Sigma(3,3n+2,12n+7)$ bound rational homology balls.\footnote{For $n=1$, these spheres bound even contractible four-manifolds, see \cite{FS81} and \cite{F84}.} When $n$ is even, they both have $\bar{\mu}=1$ and thus they non-trivially bound rational homology balls.
\end{theorem}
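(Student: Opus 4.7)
The plan is to follow the two-step strategy of Akbulut--Larson: first construct explicit rational homology ball fillings via Kirby calculus, then obstruct integral homology ball fillings for even $n$ via the Neumann--Siebenmann invariant.

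For the first step, the starting point is the standard plumbing description of each Brieskorn sphere as the boundary of a negative-definite star-shaped four-manifold. The parameters $(4n+3,12n+7)$ and $(3n+2,12n+7)$ differ from the Akbulut--Larson parameters $(4n+1,12n+5)$ and $(3n+1,12n+5)$ only by a translation in $n$, so I expect their sequence of handle moves to adapt with only cosmetic changes in the coefficients. Concretely, I would introduce blowups to expose a cancelling structure in the plumbing, perform the analogous sequence of handle slides that turns one branch of the star into two parallel strands, cancel the resulting $1$--$2$ handle pair, and finish by attaching a single $3$-handle along an $S^{1}\times S^{2}$ summand that appears in the boundary of the $1$-skeleton. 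The output is a four-manifold $W$ with $\partial W$ the desired Brieskorn sphere and $H_{\ast}(W;\mathbb{Q})\cong H_{\ast}(B^{4};\mathbb{Q})$; this is verified by tracking the intersection form through the moves and observing that the leftover linking matrix has determinant a square.

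For the second step, the Neumann--Siebenmann invariant $\bar{\mu}(\Sigma(p,q,r))$ is a computable integer built from the signature of the canonical plumbing together with Dedekind-sum corrections in $(p,q,r)$, and it vanishes on any integral homology sphere which bounds an acyclic four-manifold. I would plug the triples $(2,4n+3,12n+7)$ and $(3,3n+2,12n+7)$ into this formula, split by the parity of $n$, and check that the arithmetic yields $\bar{\mu}=1$ precisely when $n$ is even (and $\bar{\mu}=0$ when $n$ is odd, consistent with the contractible fillings at $n=1$ recorded in the footnote via \cite{FS81} and \cite{F84}). The non-triviality half of the theorem then follows immediately.

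The principal obstacle will be the Kirby calculus of the first step. Finding the correct sequence of slides that is simultaneously compatible with the arithmetic relating $4n+3$ and $12n+7$ (resp.\ $3n+2$ and $12n+7$) is delicate, and I would spend most of the effort translating the Akbulut--Larson diagrams into this shifted family and verifying that exactly one $3$-handle suffices to kill the leftover $\mathbb{Z}/k\mathbb{Z}$ in $H_{1}$, in such a way that the final $4$-manifold has trivial rational homology rather than merely torsion $H_{1}$ of the wrong order. The $\bar{\mu}$-calculation, by contrast, should reduce to a short number-theoretic check once the Seifert invariants are in hand.
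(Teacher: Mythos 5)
The obstruction half of your plan is sound and is essentially what the paper does: the paper computes $\bar{\mu}$ combinatorially from the negative-definite plumbing (signature $-n-5$; spherical Wu class of square $-n-13$ when $n$ is even and $-n-5$ when $n$ is odd), giving $\bar{\mu}=1$ exactly for even $n$; your Dedekind-sum formulation of the same invariant, with the parity split and the consistency check at $n=1$, would yield the same arithmetic.

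The gap is in the construction of the rational ball. The paper never builds a handle decomposition of the filling, and in particular never has to produce a $3$-handle by hand. Instead it performs blow-downs on the plumbing diagram (with a two-stage ``reduction trick'' collapsing general $n$ to the base case $n=1$) to show that $\Sigma(2,4n+3,12n+7)$ and $\Sigma(3,3n+2,12n+7)$ are each obtained by a $(-1)$-surgery on $Y=S^3_0(4_1)$, the zero-surgery on the figure-eight knot, and then quotes the Akbulut--Larson lemma: because the figure-eight knot is \emph{rationally slice}, the complement of its rational slice disk is a rational homology $S^1\times B^3$ with boundary $Y$, and attaching the surgery $2$-handle gives the rational ball. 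Your outline omits this mechanism entirely: you never name a rationally slice knot, and your plan to ``attach a single $3$-handle along an $S^1\times S^2$ summand that appears in the boundary of the $1$-skeleton'' presupposes precisely the nonseparating $2$-sphere whose existence is the whole difficulty --- the introduction's observation that any such rational ball must contain $3$-handles is exactly why these examples are hard to find, and there is no routine Kirby-calculus procedure for exposing such a sphere; the rational slice disk of the figure-eight knot is what furnishes it, abstractly. Relatedly, checking that the ``leftover linking matrix has determinant a square'' is not a criterion for the resulting four-manifold to be a rational ball in this setting; what the paper's Lemma 2.1 actually rests on is a Mayer--Vietoris computation for the slice-disk complement with a $2$-handle attached. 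To repair the write-up, replace the direct $3$-handle construction by (i) the Kirby-calculus identification of each Brieskorn sphere as an integral surgery on $S^3_0(4_1)$ and (ii) an appeal to the rational sliceness of the figure-eight knot via the Akbulut--Larson lemma.
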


It is not possible to detect linear independence of these spheres in $\Theta_{\mathbb{Z}}^3$ by calculating their current integral homology cobordism invariants.\footnote{It is also unrealizable for the infinite families of Akbulut and Larson.} If it were possible with an affirmative way, this would imply the existence of an infinitely generated subgroup $\mathbb{Z}^{\infty}$ in $\mathrm{Ker}(\psi)$.

Continuing with the manipulation of Akbulut and Larson's main argument, we simply recover some classical results proven around nineteen eighties following the celebrated work of Kirby \cite{K78}. The initial two were showed in \cite{AK79} and \cite{F84} respectively. The following two families came from \cite{CH81} and their first elements were also appeared in \cite{AK79}. The rest were due to \cite{S78}.\footnote{Originally, these spheres were all shown to be bound contractible four-manifolds.}

\begin{theorem}[Akbulut and Kirby, Fickle, Casson and Harer, and Stern]
\label{chs}
The following Brieskorn spheres bound integral homology balls: $\Sigma(2,3,13)$, $\Sigma(2,3,25)$, $\Sigma(2,4n+1,4n+3)$, $\Sigma(3,3n+1,3n+2)$, $\Sigma(2,4n+1,20n+7)$, $\Sigma(3,3n+1,21n+8)$, $\Sigma(2,4n+3,20n+13)$ and $\Sigma(3,3n+2,21n+13)$.
\end{theorem}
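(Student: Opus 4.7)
The plan is to treat the eight statements of Theorem \ref{chs} as eight separate handlebody exercises, all run through the same machine used in the proof of Theorem \ref{main}. For each Brieskorn sphere $\Sigma(p,q,r)$ I would start from its canonical plumbing/surgery description (the star-shaped diagram determined by the continued-fraction expansions of the Seifert invariants of $\Sigma(p,q,r)$) and aim to arrive, via a sequence of Rolfsen twists, blow-ups and blow-downs, isotopies and handle slides, at a Kirby diagram in which each dotted $1$-handle cancels geometrically against a $0$-framed unknotted $2$-handle. Since the resulting $4$-manifold is then built from one $0$-handle, $k$ canceling $1$-/$2$-handle pairs and (at worst) a $4$-handle, it is an integral homology ball with the required boundary.

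Concretely, for the two sporadic cases $\Sigma(2,3,13)$ and $\Sigma(2,3,25)$ I would simply imitate the opening moves of the argument Akbulut and Larson use for $\Sigma(2,3,7)$ and $\Sigma(2,3,19)$: replace the exceptional fiber of multiplicity $2$ (resp.\ $3$) by a dotted circle via a Rolfsen twist, then repeatedly blow down $\pm 1$-framed unknots linking only one component, until the diagram collapses. For the four one-parameter families $\Sigma(2,4n+1,4n+3)$, $\Sigma(3,3n+1,3n+2)$, $\Sigma(2,4n+1,20n+7)$ and $\Sigma(3,3n+1,21n+8)$ I would run the same recipe but track the framings and linking numbers as polynomials in $n$; the key is to find a sequence of moves whose \emph{combinatorial} description is independent of $n$, so that the verification reduces to computing a handful of arithmetic identities in $n$. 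The remaining two families $\Sigma(2,4n+3,20n+13)$ and $\Sigma(3,3n+2,21n+13)$ are parallel: one adapts the move sequence used in the $4n+1$/$3n+1$ cases, with sign and framing shifts dictated by the change of Seifert invariants.

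The proof that the final four-manifold is an integral (not merely rational) homology ball amounts to exhibiting the cancellation of $1$- and $2$-handles \emph{geometrically}, so there is nothing to check on the intersection form; alternatively, one can count handles and inspect the resulting linking matrix, which should be unimodular with the right signature. Because these manifolds were known (by the cited work of Akbulut--Kirby, Fickle, Casson--Harer and Stern) to be contractible, our outputs are forced to be at worst homotopy balls, so any homological check is enough.

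The main obstacle is bookkeeping. For each family the diagram carries $O(1)$ components but the framings grow linearly in $n$, and a single misplaced sign during a Rolfsen twist propagates through every subsequent handle slide; I would therefore organize the argument by first doing the moves pictorially for small $n$ (say $n=0,1,2$) to fix the recipe, and only then writing out the general $n$ step, verifying by induction that the diagram at each intermediate stage has the claimed form. The second, more conceptual difficulty is that the recipes for the $p=2$ and $p=3$ families, although morally identical, differ in the details of which exceptional fiber is converted to a dotted circle; isolating the common pattern so that the four families can be presented in parallel is the part requiring the most care.
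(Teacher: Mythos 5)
Your proposal takes a genuinely different route from the paper, and as written it has a real gap. The paper does not try to exhibit the homology ball by canceling handles; instead it proves Lemma~\ref{integral}, a smoothly slice analogue of Akbulut--Larson's lemma: if $Y'$ is $0$-surgery on a slice knot, then any integral homology sphere obtained by an integral surgery on $Y'$ bounds an integral homology ball (the ball being the slice-disk complement $B^4\setminus \nu D$ with one two-handle attached, verified by Mayer--Vietoris and duality). The entire burden then shifts to three-dimensional Kirby calculus: one only has to recognize each Brieskorn sphere as a $(\pm1)$-surgery on the $0$-surgery of the unknot or the stevedore knot, which the paper does by explicit blow-down sequences from the plumbing diagrams, reusing the ``reduction trick'' from the proof of Theorem~\ref{main} and the Akbulut--Larson iterative trick for the one-parameter families. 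Your target --- a diagram in which every dotted circle cancels geometrically against a $0$-framed $2$-handle --- is the stronger classical conclusion (contractibility), and reaching it from the plumbing requires more than blow-ups/downs, slides and isotopies: those moves only alter the four-manifold by $\pm\mathbb{CP}^2$ summands, so at some point you must invoke the zero-dot replacement or carve out a disk, while a Rolfsen twist is a purely three-dimensional move that destroys the handle-diagram interpretation. You never say where in your recipe this transition from three-manifold moves to a four-manifold filling occurs, and without a lemma like Lemma~\ref{integral} that is exactly the step that needs an argument.

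The second, more concrete gap is that no move sequences are actually produced. For each of the six infinite families, the existence of an $n$-independent blow-down recipe is the entire content of the statement; ``track the framings as polynomials in $n$'' and ``fix the recipe for small $n$, then induct'' are promises rather than proofs, whereas the paper exhibits the base cases and the inductive step (the reduction trick and the Akbulut--Larson trick) explicitly. Finally, your fallback --- ``these manifolds were known to be contractible, so any homological check is enough'' --- is circular: the theorem being proved is precisely (a weakening of) those cited results, so they cannot be assumed.
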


Both proofs are based on the combination of the following two observations. The first one is that attaching a four-dimensional two-handle $B^2 \times B^2$ to an integral (resp. a rational) homology $S^1 \times D^3$ produces an integral (resp. a rational) homology ball if the boundary of the resulting manifold is an integral homology sphere. The latter one is that a smoothly slice (resp. a rationally slice) knot bounds a smoothly embedded disk in the four-ball $B^4$ (resp. in a rational homology ball). Each integral (resp. rational) homology ball for the Brieskorn spheres is obtained by choosing an integral (resp. a rational) homology $S^1 \times D^3$ to be a smooth disk complement for a smoothly slice (resp. a rationally slice) knot and attaching a four-dimensional two-handle appropriately.

All these examples are obtained from the slice twist knots. It will be interesting to find Brieskorn spheres bounding homology balls by starting with slice knots other than twist knots. Note that the rationally slice knots in the three-sphere are in abundance due to Kawauchi \cite{Kaw79} and \cite{Kaw09}, and Kim and Wu \cite{KW18}. Indeed, any hyperbolic amphichiral knot is rationally slice. Further, any Miyazaki knot (i.e. fibered amphichiral knot with irreducible Alexander polynomial) is rationally slice.

We finally point out that all Brieskorn spheres listed in the paper are simply related to each other by the topological notion called \emph{Seifert fiber surgery}. Applying a specific type of Seifert fiber surgery along a Seifert fiber seems to effect as adding a twist to initial slice knot. However, the complete understanding of this claim is still mysterious from the handle theoretic perspective. For the details of Seifert fiber surgery operation, we suggest the recent paper of Lidman and Tweedy \cite{LT18}. 

\subsection*{Organization}
The structure of the paper is as follows. In Section \ref{rhb}, we present the proof of new Brieskorn spheres bounding rational homology balls. In Section \ref{ihb}, we give the proof of manipulation of Akbulut and Larson's argument. We finally recover some classical results about Brieskorn spheres bounding integral balls by using our new argument in a simple way.

\subsection*{Acknowledgements}
The author would like to thank his advisor \c{C}a\u gr{\i} Karakurt for his guidance, support, and tolerance. Thoughtful suggestions of the anonymous referee improve the flow and structure of the paper. The author is grateful to Jonathan Simone for recognizing a mistake in the handle diagram of surgeries of twist knots.

\section{Rational Homology Balls}
\label{rhb}
A knot in the three-sphere is called \emph{rationally slice} if it bounds a smoothly properly embedded disk in a rational homology ball. The basic example of rational slice knots is the figure-eight knot, see \cite[Theorem 4.16]{C07}, \cite[Section 3]{AL18}, and the combination of papers \cite{Kaw79} and \cite{Kaw09}. 

The main argument of Akbulut and Larson relates rationally slice knots with rational homology balls via surgeries.

\begin{lemma}[\cite{AL18}, Lemma 2]
\label{rball}
Let $Y$ be the three-manifold obtained by zero-surgery on a rationally slice knot in the three-sphere. Then any integral homology sphere obtained by an integral surgery on on a knot in $Y$ bounds a rational homology ball.
\end{lemma}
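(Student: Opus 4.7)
The plan is to upgrade the rational homology ball $W$ containing the slice disk $D$ for $K$ into a rational homology ball bounding $\Sigma$. Concretely, let $V := W \setminus \mathrm{int}(\nu(D))$ be the complement of an open tubular neighborhood of $D$ in $W$. A direct analysis of $\partial \nu(D)$ shows that $\partial V$ is exactly $S^3_0(K) = Y$, since removing a tubular neighborhood of a slice disk from a $4$-manifold performs $0$-surgery on $K$ along the boundary. Applying Mayer-Vietoris to the open cover $W = U \cup (W \setminus D)$, with $U$ a small open tube around $D$ so that $U \simeq \mathrm{pt}$ and $U \cap (W \setminus D) \simeq S^1$, and using that $W$ is rationally acyclic, one finds that $V$ is a rational homology $S^1 \times B^3$: more precisely, $H_1(V;\mathbb{Q}) \cong \mathbb{Q}$, $\widetilde H_k(V;\mathbb{Q}) = 0$ for $k \neq 1$, and the generator of $H_1(V;\mathbb{Q})$ is represented by a meridian of $K$. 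Since this meridian also generates $H_1(Y;\mathbb{Z}) = \mathbb{Z}$, the inclusion $Y \hookrightarrow V$ is a rational isomorphism on $H_1$.

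Now given $\Sigma = Y_f(J)$ obtained by integer surgery with framing $f$ along a knot $J \subset Y$, I would build $Z$ by attaching a $2$-handle to $V$ along $J \subset \partial V = Y$ with framing $f$, so that $\partial Z = \Sigma$ by construction. The long exact sequence of the pair $(Z, V)$ in rational coefficients collapses to
\[
0 \to H_2(Z;\mathbb{Q}) \to H_2(Z, V;\mathbb{Q}) \xrightarrow{\partial_*} H_1(V;\mathbb{Q}) \to H_1(Z;\mathbb{Q}) \to 0,
\]
where $H_2(Z, V;\mathbb{Q}) \cong \mathbb{Q}$ is generated by the core of the $2$-handle and $\partial_*$ sends this generator to $[J] \in H_1(V;\mathbb{Q})$. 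To conclude that $Z$ is a rational homology ball it therefore suffices to verify that $[J]$ is nonzero in $H_1(V;\mathbb{Q}) \cong \mathbb{Q}$.

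The key obstacle is this last algebraic step, which I would resolve by showing that $\Sigma$ being an integer homology sphere forces $[J]$ to be primitive in $H_1(Y;\mathbb{Z}) = \mathbb{Z}$. Consider the cobordism $X$ from $Y$ to $\Sigma$ obtained by attaching a $2$-handle to $Y \times I$ along $J \times \{1\}$ with framing $f$. Running the long exact sequence of $(X, \Sigma)$ and using $H_1(\Sigma) = H_2(\Sigma) = 0$ gives $H_1(X;\mathbb{Z}) = 0$; running the sequence of $(X, Y)$ then forces the connecting map $H_2(X, Y) = \mathbb{Z} \to H_1(Y) = \mathbb{Z}$, which sends the generator to $[J]$, to be surjective, so $[J] = \pm 1$ times the generator of $H_1(Y)$. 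Hence $[J] \neq 0$ in $H_1(V;\mathbb{Q})$, the map $\partial_*$ in the sequence above is an isomorphism, and $H_1(Z;\mathbb{Q}) = H_2(Z;\mathbb{Q}) = 0$. Combining with $\partial Z$ being an integer homology sphere and Poincar\'e--Lefschetz duality yields $H_3(Z;\mathbb{Q}) = 0$ as well, so $Z$ is a rational homology ball with $\partial Z = \Sigma$.
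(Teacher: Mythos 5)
Your proof is correct and takes essentially the same route as the paper: the paper cites \cite{AL18} for this lemma, but its proof of the integral analogue (Lemma~\ref{integral}) uses exactly your decomposition --- excise a tubular neighborhood of the slice disk to obtain a (rational) homology $S^1 \times B^3$ with boundary $Y$, attach the surgery two-handle, and run the Mayer--Vietoris/long-exact-sequence/duality argument. Your explicit verification that $[J]$ is primitive in $H_1(Y;\mathbb{Z})$ --- the one step the paper dismisses as ``easily seen'' --- is a welcome addition, as it is precisely where the hypothesis that the surgered manifold is an integral homology sphere enters.
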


Now we are ready to prove our main theorem.

\begin{proof}[Proof of Theorem \ref{main}]
Following the recipe in \cite[Section 1.1]{Sav02}, it can be easily seen that the Brieskorn spheres $\Sigma(2,4n+3,12n+7)$ and $\Sigma(3,3n+2,12n+7)$ are respectively the boundary of the negative-definite unimodular plumbing graphs shown in Figure~\ref{fig:pl3}. To complete the proof via Lemma \ref{rball}, we use the dual approach by giving integral surgeries from their plumbing graphs.

In particular, we show that $\Sigma(2,4n+3,12n+7)$ and $\Sigma(3,3n+2,12n+7)$ are both obtained by $(-1)$-surgery on a knot in $Y$ where $Y$ is the $3$-manifold obtained by performing the zero-surgery on the figure-eight knot. Their surgery diagrams corresponding to their plumbing graphs appear in Figure~\ref{fig:2} and the dark black $(-1)$-framed components give the necessary surgery to $Y$.

\begin{figure}[ht]  \begin{center}
		\includegraphics[width=.9\textwidth]{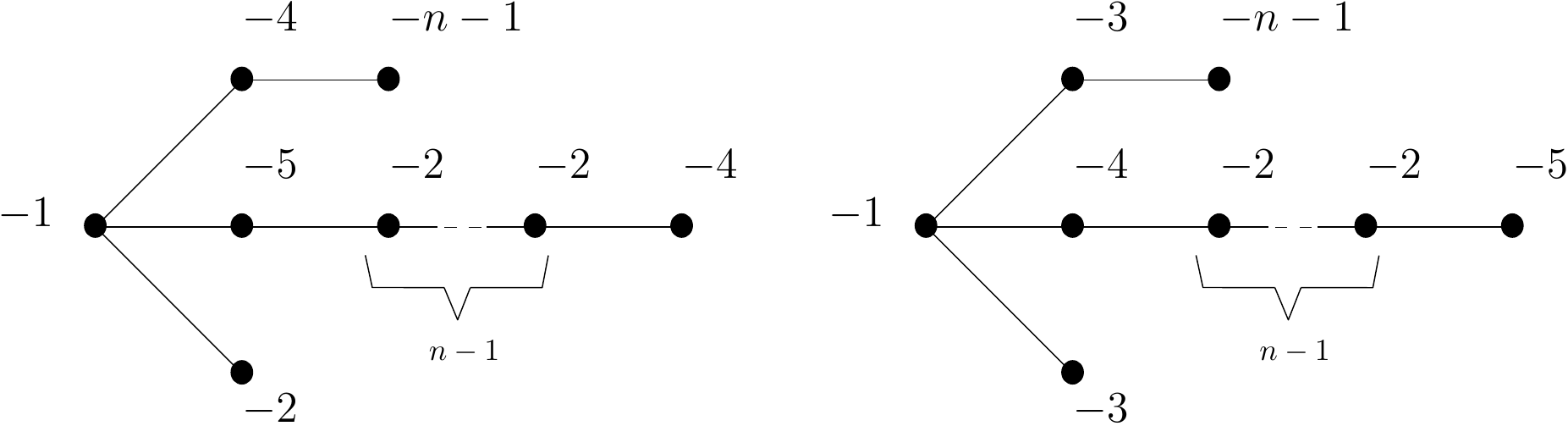}       
		\caption{The plumbing graphs}      \label{fig:pl3} 
	\end{center}
\end{figure}

We first consider $\Sigma(2,4n+3,12n+7)$ and split its proof into two parts. For the base case $n=1$, the sequence of blow downs are displayed by dark black $(-1)$-framed components and this explicit procedure can be seen in Figure~\ref{fig:1}. For the general case $n>1$, we reduce the proof to the base case by applying $n$-times blow downs and to fulfil the rest of the proof we address to the base case, see Figure~\ref{fig:2}.

\begin{figure}[ht]  \begin{center}
		\includegraphics[width=1\columnwidth]{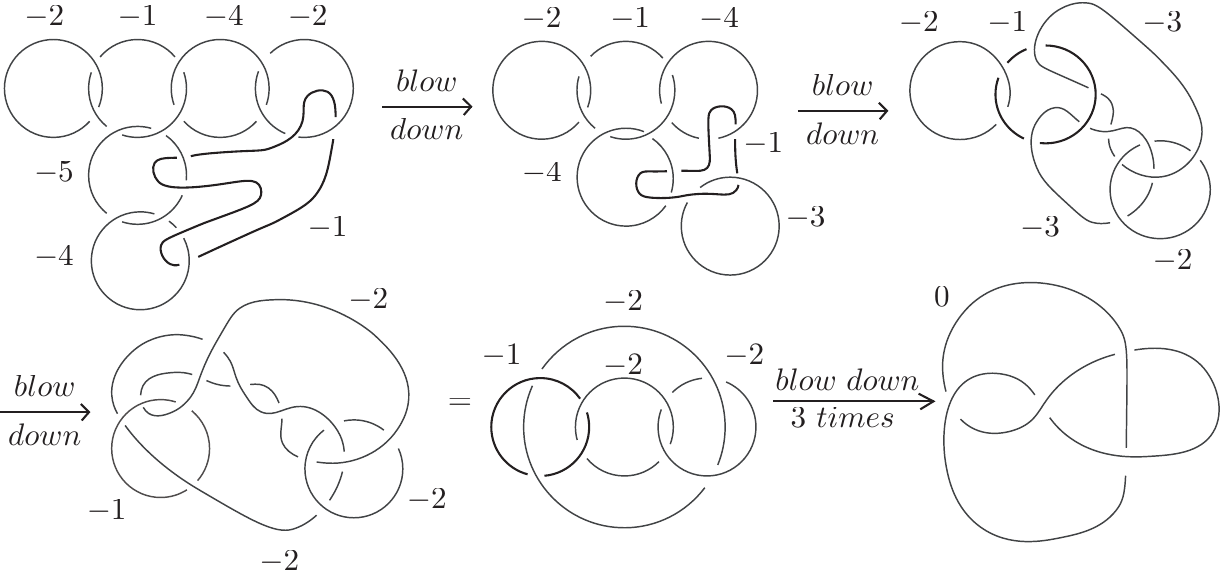}       
		\caption{The $(-1)$-surgery from $\Sigma(2,7,19)$ to $Y$}      \label{fig:1} 
	\end{center}
\end{figure}

\begin{figure}[ht]  \begin{center}
		\includegraphics[width=1\columnwidth]{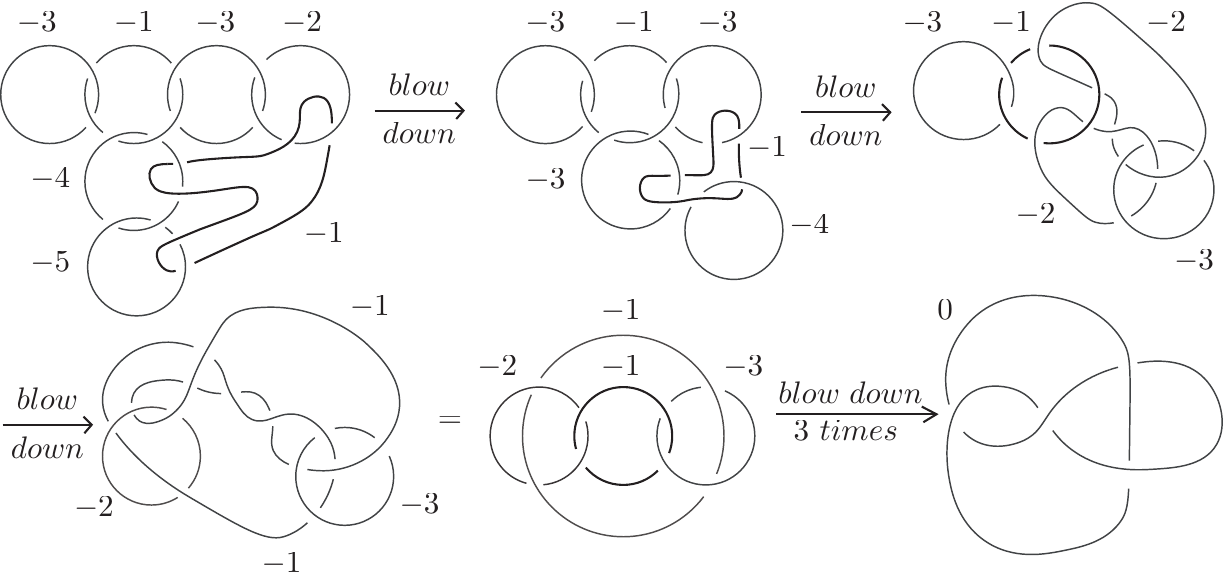}       
		\caption{The $(-1)$-surgery from $\Sigma(3,5,19)$ to $Y$}      \label{fig:1.2} 
	\end{center}
\end{figure}

\begin{figure}[ht]  \begin{center}
		\includegraphics[width=1\columnwidth]{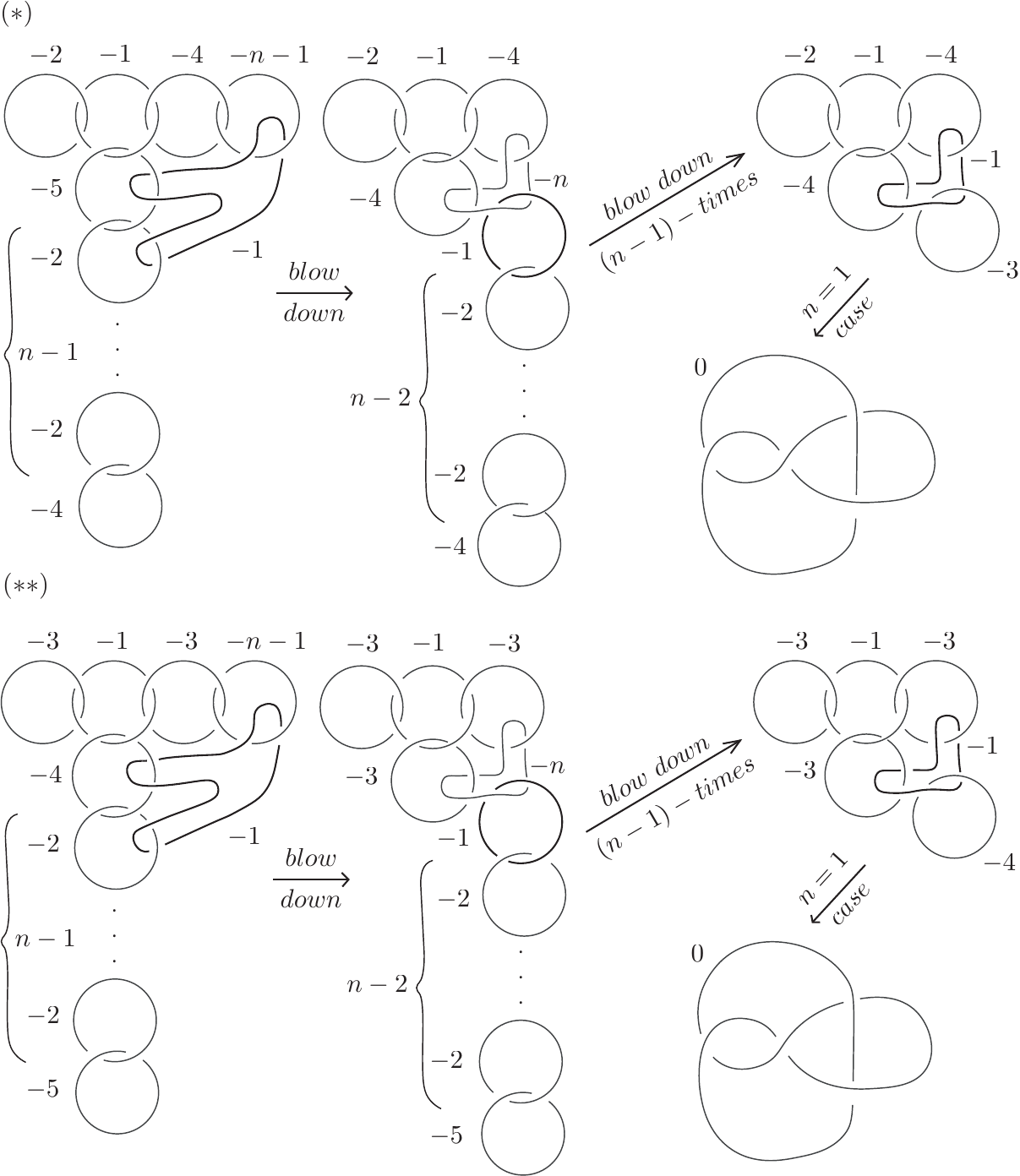}       
		\caption{The sequence of blow downs for $\Sigma(2,4n+3,12n+7)$ and $\Sigma(3,3n+2,12n+7)$ for $n >1$}      \label{fig:2} 
	\end{center}
\end{figure}

Since their plumbing graphs are quite similar, we follow the identical flow for the proof of Brieskorn spheres $\Sigma(3,3n+2,12n+7)$. For details, one can see Figure~\ref{fig:1.2} and Figure~\ref{fig:2} respectively. Since the blow down operation does not change the boundary three-manifold, we conclude that Brieskorn spheres $\Sigma(2,4n+3,12n+7)$ and $\Sigma(3,3n+2,12n+7)$ bound rational homology balls.

The computation of Neumann-Siebenmann invariant $\bar{\mu}$ (\cite{N80} and \cite{S80}) is well-known for the negative-definite unimodular plumbing graphs due to the combinatorial approach in \cite{NR78}. Note that plumbings of $\Sigma(2,4n+3,12n+7)$ and $\Sigma(3,3n+2,12n+7)$ have both signature $-n-5$,  and  when $n$ is even the square of their spherical Wu classes are both $-n-13$ (otherwise they are $-n-5$). Thus for even $n$ these Brieskorn spheres have $\bar{\mu}=1$. Therefore, they do not bound integral homology balls because the Neumann-Siebenmann invariant is an integral homology cobordism invariant for Brieskorn spheres, see \cite[Corollary 7.34]{Sav02}.
\end{proof}

\begin{remark}
We shall call the two-stage (base case $n=1$ and general case $n>1$) handle diagrammatic procedure in the proof of Theorem~\ref{main} \emph{reduction trick} and we shortly denote it by $\mathrm{RT}$.
\end{remark}

\section{Integral Homology Balls}
\label{ihb}
A knot in the three-sphere is called \emph{smoothly slice} if it bounds a smoothly properly embedded disk in the four-ball. The typical examples of smoothly slice knots are the unknot and the stevedore knot.

To associate smoothly slice knot surgeries with integral homology balls, we manipulate the main argument of Akbulut and Larson. The similar proof yields the following lemma, and for completeness we include a short proof.

\begin{lemma}
\label{integral}
Let $Y'$ be the three-manifold obtained by zero-surgery on a smoothly slice knot in the three-sphere. Then any integral homology sphere obtained by an integral surgery on on a knot in $Y'$ bounds an integral homology ball.\footnote{The lemma actually gives a contractible four-manifold if $Y'$ is $S^1 \times S^2$, which is the zero-surgery on the unknot.}
\end{lemma}

\begin{proof}
We consider the four-manifold $X= B^4 \setminus \nu D$, where $\nu D$ denotes the tubular neighborhood of the slice disk $D$. Since $\nu D$ is diffeomorphic to $D \times B^2$, we clearly have $\partial X = Y'$. Further, $X$ has the integral homology of $S^1 \times D^3$ by a simple Mayer-Vietoris argument for the triplet $(B^4, \nu D, X)$.

An integral surgery on a knot in $Y'$ corresponds to attaching a four-dimensional two-handle $B^2 \times B^2$ to $X$. Thus if the resulting three-manifold is an integral homology sphere, then the four-manifold $W$ obtained by attaching the corresponding two-handle to $X$ must be an integral homology ball. This claim can be easily seen from combining the Mayer-Vietoris sequence for the triplet $(W, X, B^2 \times B^2)$, the long exact sequence for the pair $(W, \partial W)$, and the Poincar\'e-Lefschetz duality together.
\end{proof}

We finally recover the results of Akbulut and Kirby, Fickle, Casson and Harer, and Stern in the following simple way.

\begin{proof}[Proof of Theorem \ref{chs}]
Completing the proof by Lemma \ref{integral}, we show that all these spheres are all obtained by $(-1)$-surgery on a knot in $Y'$ where $Y'$ is the zero-surgery on the unknot or the stevedore knot. 

We shall begin with single examples $\Sigma(2,3,13)$ and $\Sigma(2,3,25)$. By [Example 1.4, \cite{Sav02}], we know that $\Sigma(2,3,6n+1)$ are obtained by $(+1)$-surgery on twist knot $K_n$ for $n \geq 0$, see the first picture of Figure~\ref{fig:3}. Remark that $\Sigma(2,3,1)= S^3$, $K_0$ is the unknot and $K_2$ is the stevedore knot. Blowing down the dark black $(-1)$-framed components in the second, third and last pictures of Figure~\ref{fig:3} results in $(+1)$-surgery on $K_2$, $K_2$ and $K_4$, respectively.

\begin{figure}[ht]  \begin{center}
		\includegraphics[width=0.75\columnwidth]{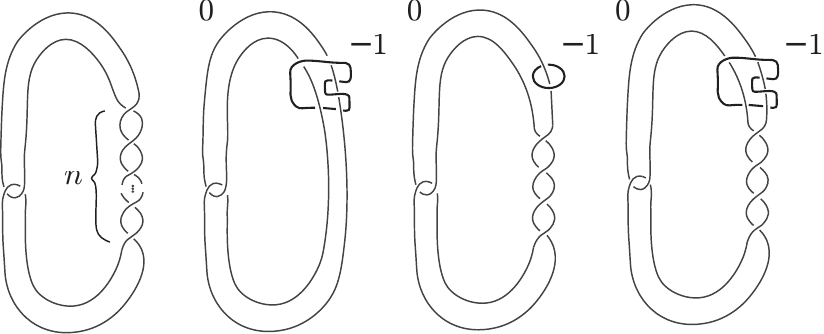}       
		\caption{The twist knot $K_n$, $\Sigma(2,3,13)$, $\Sigma(2,3,13)$, and $\Sigma(2,3,25)$ respectively}      \label{fig:3} 
	\end{center}
\end{figure}

Recap that $\Sigma(2,4n+1,4n+3)$, $\Sigma(3,3n+1,3n+2)$, $\Sigma(2,4n+1,20n+7)$, $\Sigma(3,3n+1,21n+8)$, $\Sigma(2,4n+3,20n+13)$ and $\Sigma(3,3n+2,21n+13)$ are respectively the boundary of the negative-definite unimodular plumbing graphs shown in Figure~\ref{fig:plother}. To apply Lemma~\ref{integral}, we use the dual approach by giving integral surgeries from their plumbing graphs.

\begin{figure}[ht]  \begin{center}
		\includegraphics[width=1\columnwidth]{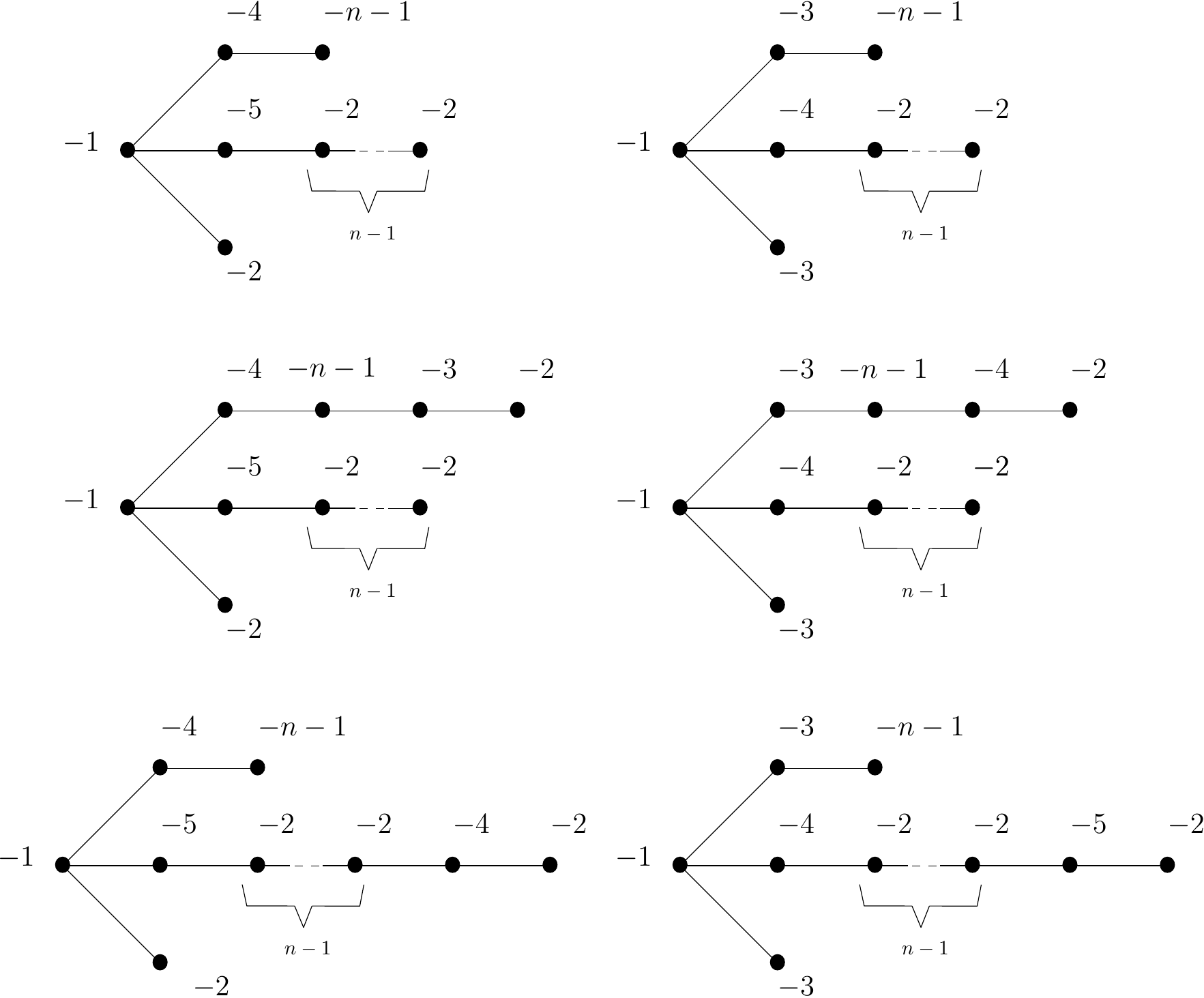}       
		\caption{The plumbing graphs}      \label{fig:plother} 
	\end{center}
\end{figure}

\begin{figure}[ht]  \begin{center}
		\includegraphics[width=1\columnwidth]{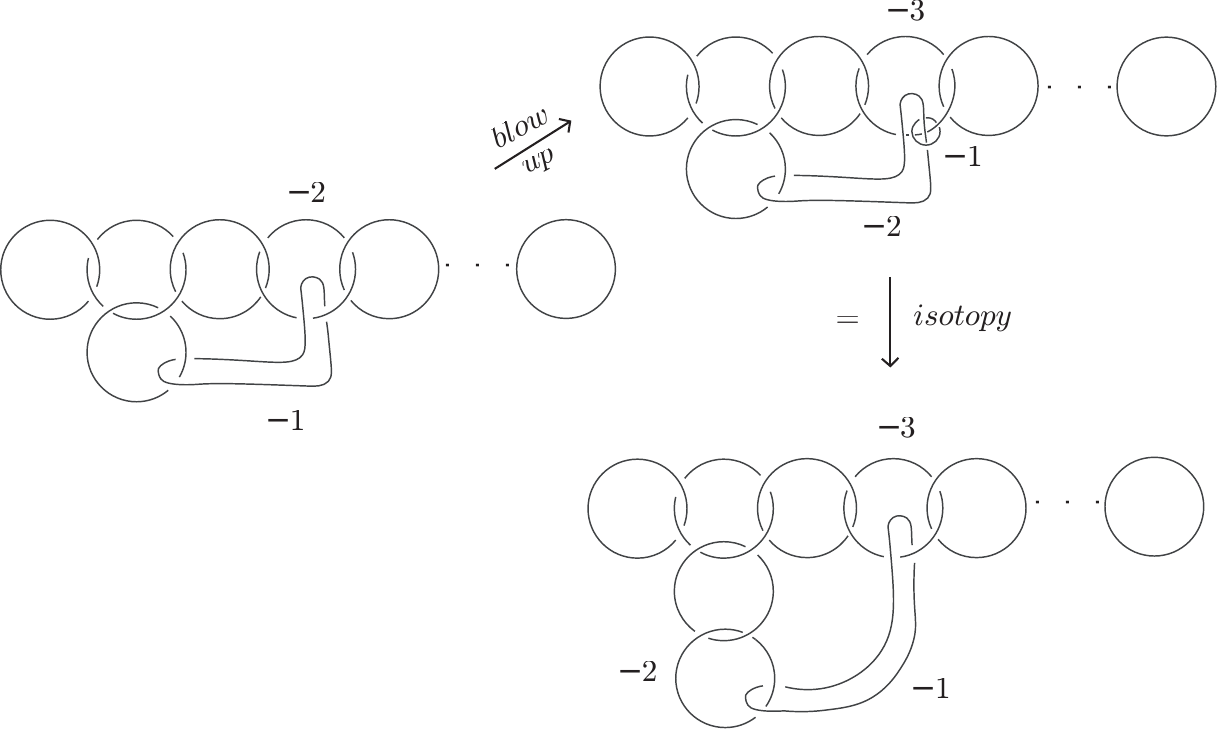}       
		\caption{The Akbulut-Larson trick}      \label{fig:trick} 
	\end{center}
\end{figure}

We initially deal with $\Sigma(2,4n+1,4n+3)$, $\Sigma(3,3n+1,3n+2)$ and $\Sigma(2,4n+1,20n+7)$, $\Sigma(3,3n+1,21n+8)$. The surgery diagrams corresponding to the first element of their plumbing graphs are shown in Figure~\ref{fig:4} and Figure~\ref{fig:5} respectively. Moreover, the dark black $(-1)$-framed components again give the necessary surgery to $Y'$. 

Following the sequence of blow downs in Figure~\ref{fig:4} and Figure~\ref{fig:5}, we reach zero-surgery on the unknot for $\Sigma(2,5,7)$ and $\Sigma(3,4,5)$, and zero-surgery on the stevedore knot for $\Sigma(2,5,27)$ and $\Sigma(3,5,29)$. Then the general families are obtained by applying the \emph{Akbulut-Larson trick\footnote{This is the critical observation of Akbulut and Larson for the proof of their main theorem, which explains the iterative procedure for passing from a surgery diagram to a consecutive one.}} successively, see Figure~\ref{fig:trick}. Each time we add a $(-2)$-framed component to the bottom chain and decrease the framing on the appropriate component in the upper-right chain by $1$. Hence this part of the proof is done.

Since the plumbing graphs of $\Sigma(2,4n+3,20n+13)$ and $\Sigma(3,3n+2,21n+13)$ are quite similar with ones in Theorem \ref{main}, we use the reduction trick. We describe their schematic proofs in Figure~\ref{fig:5} and Figure~\ref{fig:7} respectively. 

In the base cases $n=1$, the essential ingredient of proofs comes from the reduction trick and the remaining blow downs are clearly shown in Figure~\ref{fig:5}. Again we reduce proofs to the base cases $n=1$ from the general cases $n>1$ and we complete them by using the base cases, see Figure~\ref{fig:7}. As blow down operation does not change the boundary three-manifold, this finishes the proof.
\end{proof}

\begin{figure}[ht]  \begin{center}
		\includegraphics[width=.978\columnwidth]{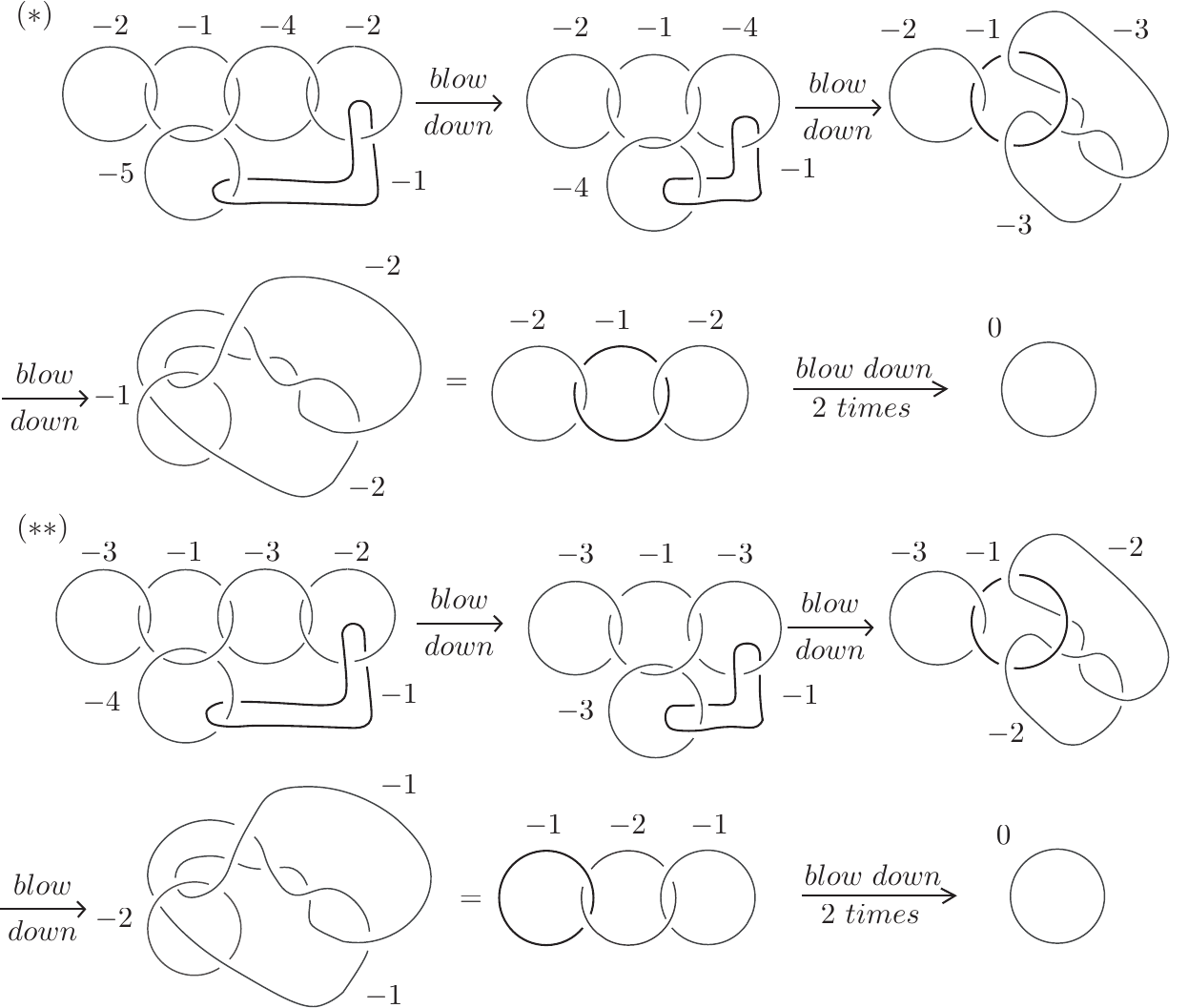}       
		\caption{The $(-1)$-surgeries from $\Sigma(2,5,7)$ and $\Sigma(3,4,5)$ to $Y'$}      \label{fig:4} 
	\end{center}
\end{figure}

\begin{figure}[ht]  \begin{center}
		\includegraphics[width=1\columnwidth]{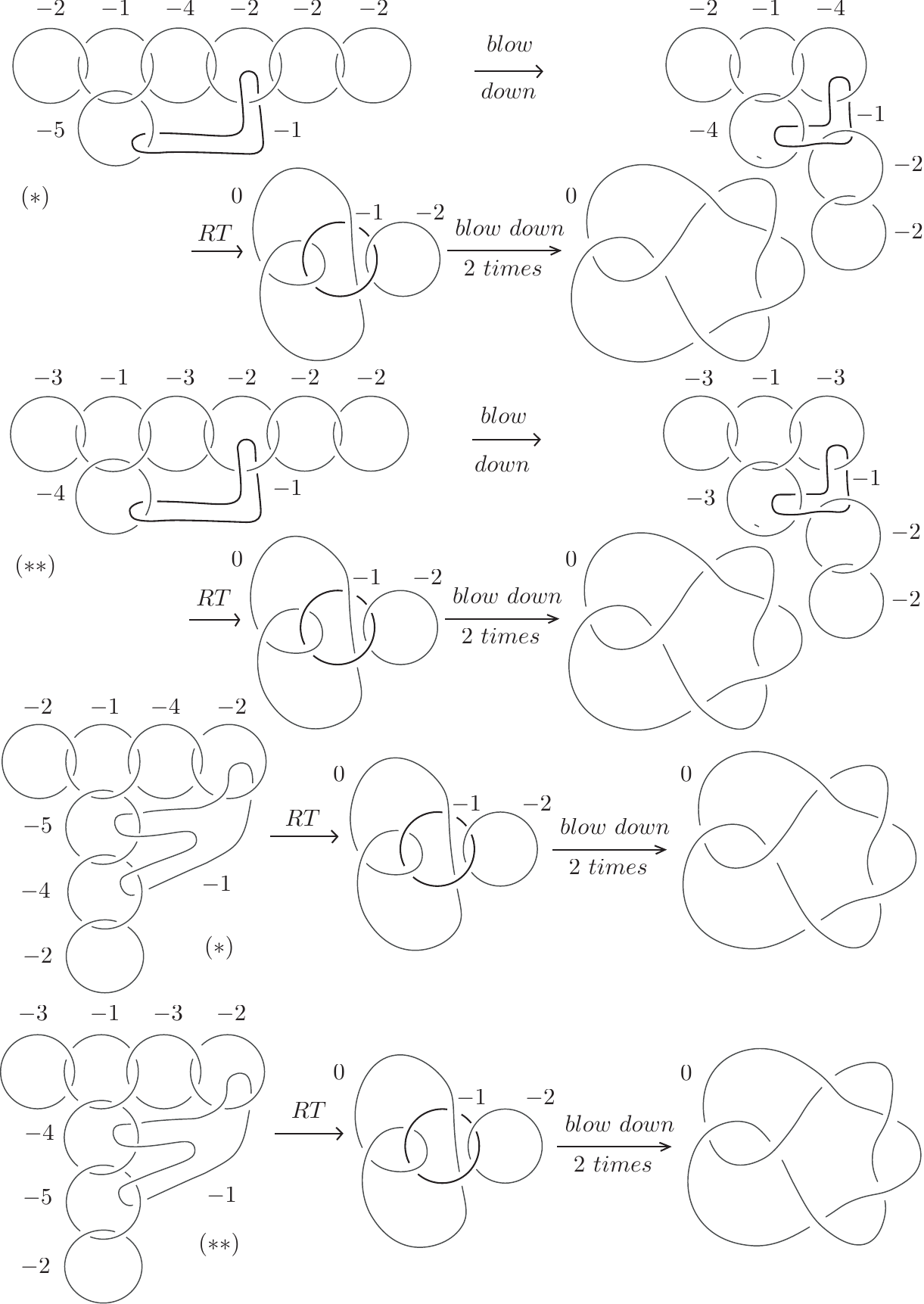}       
		\caption{The $(-1)$-surgeries from $\Sigma(2,5,27)$, $\Sigma(3,5,29)$, $\Sigma(2,7,44)$ and $\Sigma(3,5,34)$ to $Y'$ respectively}      \label{fig:5} 
	\end{center}
\end{figure}

\begin{figure}[ht]  \begin{center}
		\includegraphics[width=1\columnwidth]{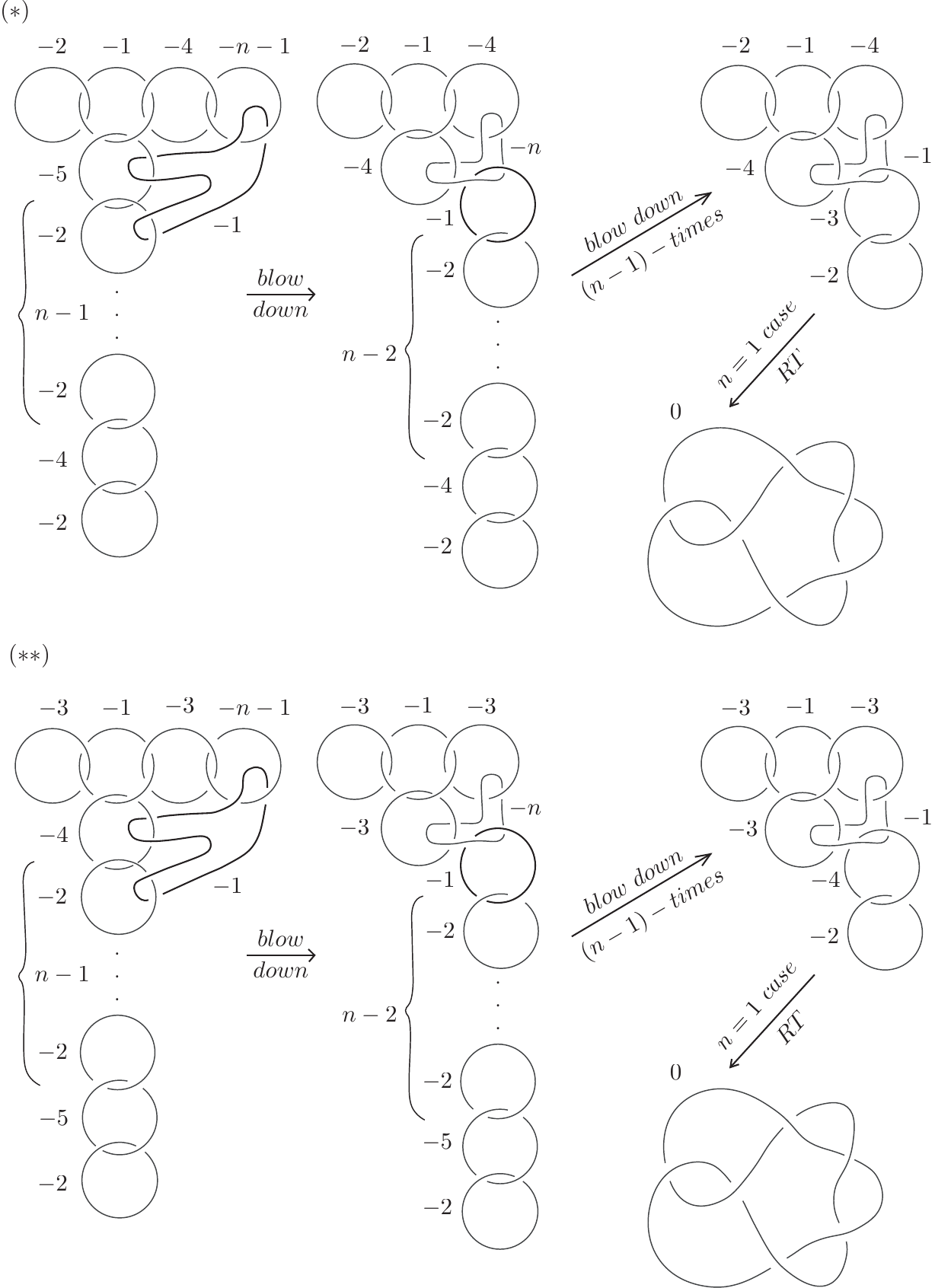}       
		\caption{The sequence of blow downs for $\Sigma(2,4n+3,20n+13)$ and $\Sigma(3,3n+2,21n+13)$}      \label{fig:7} 
	\end{center}
\end{figure}

\clearpage
\bibliography{moreratballs}
\bibliographystyle{amsalpha}

\end{document}